\documentclass[11pt]{article}
\usepackage{amsmath,amssymb,cmap,graphicx}

\textwidth 15.6cm
\textheight 23.4cm
\evensidemargin 0cm
\oddsidemargin 0cm
\topmargin -1.5cm
\parindent 0cm

\newcommand{\ee}{{\mathrm e}}
\newcommand{\oo}{{\mathrm o}}

\newcommand{\calV}{{\mathcal V}}
\newcommand{\calW}{{\mathcal W}}
\newcommand{\calX}{{\mathcal X}}

\newcommand{\natu}{{\mathbb N}}
\newcommand{\real}{{\mathbb R}}

\newcommand{\expect}[1]{{\mathbb E}\left\{{\displaystyle #1}\right\}}
\newcommand{\pr}[1]{{\mathbb P}\left\{{\displaystyle #1}\right\}}
\newcommand{\vari}[1]{{\mathbb Var}\left\{{\displaystyle #1}\right\}}
\newcommand{\var}[1]{{\mathbb Var}\left\{{\displaystyle #1}\right\}}

\newtheorem{theorem}{Theorem}

\newtheorem{lemma}[theorem]{Lemma}
\newtheorem{proposition}[theorem]{Proposition}

%%%%%%%%%%%%%%%%%%%%%%%%%%%%%%%%%%%%%%%%%%%%%
\newenvironment{proof}[1][Proof]{\begin{trivlist}
\item[\hskip \labelsep {\bfseries #1}]}
{\end{trivlist} \hspace*{\fill} $\Box$}
%%%%%%%%%%%%%%%%%%%%%%%%%%%%%%%%%%%%%%%%%%%%%

%%%%%%%%%%%%%%%%%%%%%%%%%%%%%%%%%%%%%%%%%%%%%

\def\poiss{\textrm{Po}}

\def\R{\mathbb{R}}
\def\N{\mathbb{N}}

%%%%%%%%%%%%%%%%%%%%%%%%%%%%%%%%%%%%%%%%%%%%%
\def\al{\alpha}
\def\bt{\beta}

\def\dl{\delta}
\def\lm{\lambda}
\def\eps{\varepsilon}

\begin{document}

\title{\Large Scaling Laws for Maximum Coloring of Random Geometric Graphs} 

\author{Sem Borst\thanks{Mathematics of Systems, Nokia Bell Labs, 600 Mountain Avenue, Murray Hill NJ 07974, USA, 
and Department of Mathematics and Computer Science, Eindhoven University of Technology, P.O. Box 513, 5600 MB, The Netherlands. 
E-mail: {\tt sem@research.bell-labs.com}.}
\and
Milan Bradonji\'c\thanks{Mathematics of Systems, Nokia Bell Labs, 600 Mountain Avenue, Murray Hill NJ 07974, USA. E-mail: {\tt milan@research.bell-labs.com}.}
}

\date{\today}

\maketitle

\abstract{
We examine maximum vertex coloring of random geometric graphs,
in an arbitrary but fixed dimension, with a constant number of colors.
Since this problem is neither scale-invariant nor smooth, the usual
methodology to obtain limit laws cannot be applied.
We therefore leverage different concepts based on subadditivity
to establish convergence laws for the maximum number of vertices
that can be colored.
For the constants that appear in these results, we provide the exact
value in dimension one, and upper and lower bounds in higher dimensions.
}

\section{Introduction}
\label{sec:intro}

We examine maximum coloring of random geometric graphs (RGGs),
in an arbitrary but fixed dimension~$d$, with a constant number
of colors.
The vertices of an RGG (whose spatial distribution will be defined below)
are embedded in an Euclidean space that is equipped with the $\ell_2$
distance or some $\ell_p$ distance in general, and two vertices are
connected if and only if they are within a given Euclidean distance~$r$.
More specifically, we address the questions:
{\it What is the maximum number of vertices in a sparse RGG that can
be properly colored with a constant number of colors?
In particular, what is the asymptotic behavior of that value,
as the total number of vertices in the graph tends to infinity?}

It is important to emphasize the distinction between our problem and that
of determining the chromatic number, which is the minimum required
number of colors to properly color all the vertices of a graph such that
no two adjacent vertices are assigned the same color.
Determining whether an RGG (or a unit-disk graph) is $k$-colorable,
i.e., whether its chromatic number is at most~$k$, is NP-hard even
for $k=3$, see~\cite{brent-1990-unitdiskgraphs}.
Our problem is different from determining the chromatic number,
since we are interested in the maximum number of vertices that can
be properly colored with given $k \in \N$ colors,
as well as from $k$-colorability, which is a binary decision problem.
The chromatic number of RGGs has been studied in detail
(for different values of the expected degree),
see Theorem 1.1 in~\cite{mcdiarmid-2011-chromatic}.
The chromatic number in the {\em thermodynamic regime}, when the expected degree is constant, is `almost'
logarithmic in the number of vertices~$n$, i.e., $(1+o(1))\log n / \log\log n$,
which additionally inspires our problem where only a constant number
of colors is available.

The above-mentioned questions are not only of fundamental interest,
but also motivated by applications in wireless networks, where the various
users need to be assigned channels (transmission frequencies) in order
to be able to communicate, subject to certain interference constraints.
For example, in order to avoid excessive interference, the same channel
cannot be assigned to two users within a certain reuse distance~$r$.
The total number of required channels to cover all users then
corresponds to the chromatic number of the associated interference graph
where two users are neighbors when they are located within distance~$r$.
When the user locations are governed by a spatial Poisson process,
the interference graph is an RGG, and the chromatic number will grow
without bound as the total number of users grows large.
As a result, the required number of channels to cover all users will
grow without bound, implying that the capacity per channel,
and hence the so-called max-min throughput of the network, will vanish
in the limit, which is obviously undesirable.
The question thus arises how many users can be covered when
the number of available channels is finite.
It will then not be feasible to cover all users as the total number
of users grows large, but the users that do get covered are ensured
to receive a strictly positive throughput.
The results that we prove in the present paper imply that any target
for the fraction of users to be covered, arbitrarily close to one,
can be achieved in the limit with a sufficiently large but constant
number of channels.

Besides wireless networks, RGGs have also found applications in
various further areas, e.g.~cluster analysis, statistical physics,
modeling data in high-dimensional spaces, and hypothesis testing,
to mention just a few~\cite{penrose:book}.
For problems on many of these `real' networks, the sparse regime
with constant expected vertex degree is particularly relevant,
see~\cite{DBLP:journals/im/LeskovecLDM09}.

We now formally state the main problem and results.
For any subset of points $V \subseteq \real^d$ and $r \in \real_+$,
let $G_r(V)$ be the graph with vertex set~$V$ and edge set
$E = \{\{u, v\} \in V^2: ||u - v|| \leq r\}$, i.e., connecting all
pairs of points that are within a given Euclidean distance~$r$.

The main object of interest is the cardinality of a set obtained
by a maximum proper coloring with $k$ colors of a given graph $G_r(V)$.
Note that such a set obtained by a maximum proper coloring on finite $V$ 
may not be unique, but its cardinality is unique and defined as follows.
For any $k \in \natu$, let $N_{k,r}(V)$ be the maximum number
of vertices that can be properly colored in $G_r(V)$ with $k$~colors.
For any $\lambda > 0$, let $\calX_\lambda$ be a Poisson point process
of intensity~$\lambda$ in $\real^d$. For compactness, denote
\[
F_{k,\lambda}(t) = N_{k,1}([0, t]^d \cap \calX_\lambda)
\]
for any $t \geq 0$.

Also, let ${\mathcal I}_n$ be a collection of $n$~points uniformly
and independently distributed in the unit cube $[0, 1]^d$.
For compactness, denote
\[
H_{k,r}(n) = N_{k,r}({\mathcal I}_n)
\]
for any $n \in \natu$ and $r > 0$.

\textbf{The main problem:} We are interested in the asymptotic behavior
of the expectation and moreover the distribution of $F_{k,\lambda}(t)$
as $t \to \infty$, as well as $H_{k,\nu}(n)$ as $n \to \infty$.

\textbf{The main results:} 
We show that for any $d, k \in \N$ and $\lambda > 0$, the functional
$F_{k,\lambda}(t)$ converges in probability
\[
\frac{F_{k,\lambda}(t)}{\lambda t^d} \stackrel{\textrm{p}}{\to} a_{k,\lambda} \,,
\]
for some $a_{k,\lambda} \in (0,1]$, and in distribution, for any $\nu > 0$,
\[
\frac{H_{k,\sqrt[d]{\nu / n}}(n)}{n} \stackrel{\textrm{d}}{\to} a_{k,\nu} \,.
\]

One of our main methods involves the notion of {\it subadditivity}.
Concretely, we divide the cube $[0,t]^d$ into cubes of volume $s^d$,
for some $s<t$ which we specify later, and apply the subadditivity
argument in order to relate $F_{k,\lm}(t)$ and $F_{k,\lm}(s)$.
We show that the lower and upper limits as $t \to \infty$
of $F_{k,\lm}(t)$ exist and are the same, and moreover we establish
the weak law of large numbers for $F_{k,\lm}(t)$ 
and the strong law of large numbers for $H_{k,\nu}(n)$.

In Lemma~\ref{lemma:sigma.lb}, we prove that the variance
$\vari{F_{k,\lm}(t)} = \Omega(t^d)$, i.e.,~the limiting variance 
normalized by $t^d$ is bounded away from~$0$,
and in Lemma~\ref{lemma:sigma.ub.1} we present an upper bound
on $\vari{F_{k,\lm}(t)} =O(t^d)$, which together imply
$\vari{F_{k,\lm}(t)} = \Theta(t^d)$, see Lemma~\ref{lemma:sigma.lub}. 

There are two branches of methods prevalent in discrete stochastic
geometry, subadditive and stabilization methods, usually used to obtain
the limiting behavior of some Euclidean functionals: 
laws of large numbers, central limit theorems, etc.
For an excellent survey, the reader is referred to Yukich~\cite{yukich-2013-limit}.

At first glance, the results in this paper can be seen as a subproblem
and amenable to analysis by using techniques from~\cite{bhh-1959},
and even the ``more general subadditive methods'' developed by Steele
in Chapter~$3$ of~\cite{steele-1997-probability}.
In order to apply these techniques from~\cite{steele-1997-probability},
a function $L$ that maps a finite subset of points from $\real^d$
to $\real_+$ must satisfy the following four hypotheses:
(i) normalization $L(\emptyset) = 0$;
(ii) homogeneity $L\left(\al x_1, \al x_2,\dots,\al x_n\right) =
\al L\left(x_1,x_2,\dots,x_n\right)$ for every $\al>0$;
(iii) translation invariance 
$\forall y \in \R^d$
$L\left(x_1+y,x_2+y,\dots,x_n+y\right) = 
L\left(x_1,x_2,\dots,x_n\right)$;
(iv) geometric subadditivity, where for all $m, n \geq 1$
and $x_1,x_2,\dots,x_n \in [0,1]^d$ we have
\begin{equation}
L\left(x_1,x_2,\dots,x_n\right) \leq \sum_{i=1}^{m^d}
L\left(\{x_1,x_2,\dots,x_n\} \cap Q_i\right) + O(m^{d-1}) \,,
\end{equation}
where the unit cube $[0,1]^d$ is partitioned into $m^d$ cubes $Q_i$
with side $1/m$.

Additionally, (v) $L$ is monotone, if for all~$n$ and $x_i$,
$L\left(x_1,\dots,x_n\right) \leq L\left(x_1,\dots,x_n,x_{n+1}\right)$.
For example, Steele proves a so-called ``basic theorem'',
see Theorem 3.1.1 in~\cite{steele-1997-probability},
for general subadditive
Euclidean functionals that are monotone and satisfy the four
conditions (i)--(iv).
This theorem states that if $x_1, x_2, \dots, x_n$ are independent
random variables uniformly distributed on $[0,1]^d$ then with probability one
\begin{equation}
\label{eq:L.limit}
\lim_ {n \to \infty} n^{-(d-1)/d} L\left(x_1,x_2,\dots,x_n\right) = \beta_L(d) \,,
\end{equation}
where $\bt_L(d)$ is a positive constant, which depends both on the
functional $L$ and the dimension~$d$.

In order to show applications of Theorem 3.1.1 from~\cite{steele-1997-probability}, 
we first mention, one of the classical problems in combinatorial optimization, the famous 
traveling salesman problem (TSP), where the goal is to find the minimum-length Hamiltonian tour 
among a list of cities, given the distances between each pair of cities.
For more details on TSP, the reader is referred to~\cite{applegate:tsp:book}.

One subcase of TSP is Euclidean TSP, where the objective is to determine the minimum-length Hamiltonian tour
of $n$~points $x_1,x_2,\dots,x_n$ distributed in $\R^d$, and analyze its limiting behavior as $n$ grows.
Euclidean TSP was originally studied and the limiting behavior (\ref{eq:L.limit}) was shown by Beardwood, Halton and Hammersley in~\cite{bhh-1959}. 

Euclidean TSP is one representative example to apply Theorem 3.1.1 from~\cite{steele-1997-probability}.
It is clear that the length of the minimum tour satisfies (i)--(v). 
In other words, the minimum-length Hamiltonian tour on $n$ points: 
(i) is equal to $0$ if the number of points $n=0$ (normalization);
(ii) increases $\al$ times, for every $\al>0$, if the positions
of the points are rescaled $\al$ times (homogeneity);
(iii) is translation invariant (does not change under the translation
of the points);
(iv) is subadditive, see~\cite{steele-1997-probability,goemans-1991-prob};
and 
(v) is monotone by applying the triangle inequality on three distances among 
$x_{n+1}$ and its two adjacent points in the optimal tour on $x_1,x_2,\dots,x_{n+1}$. 
Besides Euclidean TSP, subadditive Euclidean functionals, such as, the length of the least Euclidean matching, Steiner tree, and rectilinear Steiner tree, satisfy the conditions of Theorem 3.1.1 from~\cite{steele-1997-probability}, 
and their asymptotic growth rates have been studied in~\cite{subadditive-1981-steele}.

However, the functionals of interest in this work $F_{k,\lm}(t)$
and $H_{k,\nu}(n)$ satisfy normalization, translation invariance,
and subadditivity, but {\em not\/} the homogeneity property, i.e.,
$N_{k,r}(\al V) \neq \al N_{k,r}(V)$, where $\al > 0$ and $V \subset \R^d$.

Moreover, we later show that 
$N_{k,r}(V)$ is {\em not continuous\/}, also called {\em not smooth}.
In more detail, for any $V$ and $V'$ at distance at least~$r$, 
$|N_{k,r}(V \cup V') - N_{k,r}(V)| = |N_{k,r}(V')| = \Theta(V')$,
which is not $O(|V'|^{(d-p)/d})$ for some $p>0$, hence one cannot apply the methods from~\cite{rhee-1993-matching}.
For example, TSP and MST (Minimum Spanning Tree) are smooth
functionals.
In~\cite{yukich-2013-limit}, Yukich presents laws of large numbers
for smooth, superadditive Euclidean functionals, see Theorem 8.1,
as well as the general `umbrella theorem' for subadditive, smooth
Euclidean functionals, see Theorem 8.3.

Additionally, there is a large body of work, which uses the
{\it stabilization\/} method, see Penrose~\cite{penrose:book},
Yukich~\cite{yukich-2013-limit}.
It is not hard to show that stabilization holds in the sub-critical
regime for $\lm < \lm_c$, when a.a.s~\footnote{a.a.s. stands for ``asymptotically almost surely''.}
no infinite cluster exists,
but it is not clear at all whether stabilization holds in the
super-critical regime $\lm > \lm_c$ that we are interested in.
Moreover, the stabilization is a very `effective' method, see Penrose~\cite{penrose:book}, which needs
assumptions on a considered functional
that seem rather strong and that we do not impose in our proofs.

\section{Main Properties and Preliminary Results}
\label{sec:def}

As introduced before, for any subset of points $V \subseteq \real^d$
and $r \in \real_+$, let $G_r(V)$ be the graph with vertex set~$V$
and edge set $E = \{\{u, v\} \in V^2: ||u - v|| \leq r\}$, i.e.,
connecting all pairs of points that are within a given distance~$r$.
For any $k \in \natu$, let $N_{k,r}(V)$ be the maximum number of
vertices that can be properly colored in $G_r(V)$ with $k$~colors.

We state a few basic but useful properties.
\newline
(0)
$N_{k,r}(\alpha V) = N_{k, r / \alpha}(V)$, for all $\al>0$,
since the edge sets of $G_r(\alpha V)$ and $G_{r / \alpha}(V)$ coincide.
\newline
(i) Inhomogeneity ({\em no\/} scale-invariance):
There exists $\al>0$ such that $N_{k,r}(\alpha V) \neq \alpha N_{k, r}(V)$,
and in general the inequality holds for many values $\al>0$.''
\newline
(ii) Monotonicity:
If $U \subseteq V$, then $N_{k,r}(U) \leq N_{k,r}(V)$.
Also, $N_{k,r}(V)$ is decreasing in~$r$.
\newline
(iii) Subadditivity:
$N_{k,r}(V_1 \cup V_2) \leq N_{k,r}(V_1) + N_{k,r}(V_2)$ for any
$V_1$, $V_2$.
\newline
(iv) Near-superadditivity:
If $V_1, V_2 \subseteq V$ are at a distance more than~$r$, i.e.,
$||v_1 - v_2|| > r$ for all $v_1 \in V_1$, $v_2 \in V_2$,
then $N_{k,r}(V_1 \cup V_2) \geq N_{k,r}(V_1) + N_{k,r}(V_2)$. 

By virtue of the property $(0)$ and the properties of
the spatial Poisson process,
\[
N_{k,r}([0, t]^d \cap \calX_\lambda) \stackrel{{\mathrm st}}{=}
N_{k,1}(([0, t]^d \cap \calX_\lambda)/r) \stackrel{{\mathrm st}}{=}
N_{k,1}(([0, t/r]^d \cap \calX_{\lambda r^d}) = F_{k,\lambda r^d}(t/r),
\]
and hence we may focus the attention on the random variable
$F_{k,\lambda}(t)$ without loss of generality.

Also, it follows from the above-mentioned monotonicity property
and a simple coupling argument that $H_{k,\nu}(n)$ is
stochastically increasing in~$n$. 

For any $\mu \in \real_+$, let $\poiss(\mu)$ be a Poisson random
variable with parameter~$\mu$ and $\pi(\mu, m) = \pr{\poiss(\mu) = m}$.
The property $(0)$ and the properties of the spatial 
Poisson process furnish a useful relationship between the variables
$F_{k,\lambda}(t)$ and $H_{k,\nu}(n)$:
\[
F_{k, \lambda}(t) %= N_{k,1}([0, t]^d \cap \calX_\lambda)
\stackrel{{\mathrm st}}{=} N_{k,1/t}(([0, t]^d \cap \calX_\lambda)/t)
\stackrel{{\mathrm st}}{=} N_{k,1/t}([0, 1]^d \cap \calX_{\lambda t^d})
\stackrel{{\mathrm st}}{=} H_{k,1/t}(\poiss(\lambda t^d)),
\]
and in particular,
\begin{equation}
\expect{F_{k,\lambda}(t)} = \expect{H_{k,1/t}(\poiss(\lambda t^d))} =
\sum_{l = 0}^{\infty} \expect{H_{k,1/t}(l)} \pi(\lambda t^d, l).
\label{rela1}
\end{equation}

For any $s, t \in \real_+$, define
\[
M^u(s, t) = \left\lceil\frac{t}{s}\right\rceil^d \,,
\]
and
\[
M^l(s, t) = \left\lfloor\frac{t}{s + 1}\right\rfloor^d \,.
\]

Invoking the stationarity (translation invariance) of the spatial
Poisson process, the subadditivity property implies
\begin{equation}
F_{k,\lambda}(t) \leq_{{\mathrm st}} \sum_{i = 1}^{M^u(s, t)} N_i(s)\,,
\label{ub1}
\end{equation}
while the near-superadditivity property implies
\begin{equation}
F_{k,\lambda}(t) \geq_{{\mathrm st}} \sum_{i = 1}^{M^l(s, t)} N_i(s)\,,
\label{lb1}
\end{equation}
where $N_1(s), N_2(s), \dots$ are i.i.d.\ copies of the random
variable $F_{k,\lambda}(s)$.

For compactness, denote
$\overline{F}_{k,\lambda}(t) = \expect{\widetilde{F}_{k,\lambda}(t)}$, with
\[
\widetilde{F}_{k,\lambda}(t) = \frac{F_{k,\lambda}(t)}{\lambda t^d},
\]
as the `coloring ratio', i.e., the ratio of the expected maximum number
of vertices that can be colored and the expected total number of vertices.

The stochastic upper and lower bounds~(\ref{ub1}) and~(\ref{lb1}) yield
\begin{equation}
\widetilde{F}_{k,\lambda}(t) \leq_{{\mathrm st}}
\left(\frac{s}{t}\right)^d \sum_{i = 1}^{M^u(s, t)} \widetilde{N}_i(s)\,,
\label{ub2}
\end{equation}
and
\begin{equation}
\widetilde{F}_{k,\lambda}(t) \geq_{{\mathrm st}}
\left(\frac{s}{t}\right)^d \sum_{i = 1}^{M^l(s, t)} \widetilde{N}_i(s)\,,
\label{lb2}
\end{equation}
where $\widetilde{N}_1(s), \widetilde{N}_2(s), \dots$ are i.i.d.\ copies
of the random variable $\widetilde{F}_{k,\lambda}(s)$.

\begin{proposition}
\label{prop:one}
For any $k \geq 1$, $\lambda > 0$, the limit
$\lim_{t \to \infty} \overline{F}_{k,\lambda}(t)$ exists, and equals
$a_{k,\lambda} := \inf_{t > 0} \overline{F}_{k,\lambda}(t)$.
\end{proposition}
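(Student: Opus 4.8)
The plan is to read the proposition as a Fekete-type subadditivity statement for the deterministic quantity $\overline{F}_{k,\lambda}(t)=\expect{F_{k,\lambda}(t)}/(\lambda t^d)$, with the exact (error-free) bound~(\ref{ub1}) serving as the subadditivity hypothesis. The crucial feature is that (\ref{ub1}) carries no additive surface correction, so the induced inequality for expectations is genuinely multiplicative and the standard limit extraction goes through once the ceiling is controlled.

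First I would take expectations in~(\ref{ub1}). Each $F_{k,\lambda}(\cdot)$ is bounded above by the number of points of $\calX_\lambda$ in the corresponding cube, a Poisson variable with finite mean, so all quantities are integrable and the stochastic dominance transfers to expectations. Writing $g(t):=\expect{F_{k,\lambda}(t)}$ and using that $N_1(s),N_2(s),\dots$ are i.i.d.\ copies of $F_{k,\lambda}(s)$, this yields
\[
g(t)\ \le\ M^u(s,t)\,g(s)\ =\ \Bigl\lceil\tfrac{t}{s}\Bigr\rceil^{d} g(s)
\qquad\text{for all } s,t>0,
\]
and, dividing by $\lambda t^d$,
\[
\overline{F}_{k,\lambda}(t)\ \le\ \frac{\lceil t/s\rceil^{d}}{(t/s)^{d}}\,\overline{F}_{k,\lambda}(s).
\]
I would also note at the outset that $0\le \overline{F}_{k,\lambda}(t)\le 1$, again because $F_{k,\lambda}(t)$ never exceeds the point count whose mean is $\lambda t^d$; hence $a:=\inf_{t>0}\overline{F}_{k,\lambda}(t)$ is a well-defined number in $[0,1]$, and trivially $\liminf_{t\to\infty}\overline{F}_{k,\lambda}(t)\ge a$.

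It then remains to establish the matching bound $\limsup_{t\to\infty}\overline{F}_{k,\lambda}(t)\le a$. Given $\eps>0$, I would fix $s$ with $\overline{F}_{k,\lambda}(s)<a+\eps$ and let $t\to\infty$ with this $s$ held fixed. Since $t/s\le\lceil t/s\rceil\le t/s+1$, the prefactor satisfies $\lceil t/s\rceil^{d}/(t/s)^{d}\to 1$, so $\limsup_{t\to\infty}\overline{F}_{k,\lambda}(t)\le\overline{F}_{k,\lambda}(s)<a+\eps$. Letting $\eps\downarrow0$ gives $\limsup_{t\to\infty}\overline{F}_{k,\lambda}(t)\le a$, and combined with the trivial lower bound this shows the limit exists and equals $a=a_{k,\lambda}$.

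The argument is short precisely because (\ref{ub1}) has no $O(m^{d-1})$ boundary term. The genuine obstacle — and the reason the classical theorems of Steele cannot be invoked directly — is that $N_{k,r}$ fails homogeneity, so one cannot rescale the sub-cubes to unit side and must instead run subadditivity at a \emph{fixed} scale $s$ as above. The only technical care required is the passage from the stochastic bound to the expectation bound and the control of the ceiling prefactor, both of which are routine.
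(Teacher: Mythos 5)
Your proof is correct and follows essentially the same route as the paper's: take expectations in the subadditive stochastic bound to get $\overline{F}_{k,\lambda}(t)\le (s/t)^d M^u(s,t)\,\overline{F}_{k,\lambda}(s)$, observe that the prefactor tends to $1$ for fixed $s$, and conclude by a Fekete-type argument that $\limsup_t \overline{F}_{k,\lambda}(t)\le \inf_s \overline{F}_{k,\lambda}(s)\le \liminf_t \overline{F}_{k,\lambda}(t)$. The only cosmetic difference is that you fix a near-optimal $s$ up front while the paper carries a $(1+\delta)$ factor and takes the infimum afterwards; both are equivalent.
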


\begin{proof}
Taking expectations in the stochastic upper bound~(\ref{ub2}), we obtain
\[
\overline{F}_{k,\lambda}(t) \leq
\left(\frac{s}{t}\right)^d M^u(s, t) \overline{F}_{k,\lambda}(s) \,.
\]

Note that for any fixed~$s$, 
\[
\lim_{t \to \infty} \left(\frac{s}{t}\right)^d M^u(s, t) = 1 \,,
\]
i.e., for any $\delta > 0$, there exists $t(s, \delta)$ such that
\[
\left(\frac{s}{t}\right)^d M^u(s, t) \leq 1 + \delta
\]
for all $t \geq t(s, \delta)$.

This yields
\[
\limsup_{t \to \infty} \overline{F}_{k,\lambda}(t) \leq
(1 + \delta) \overline{F}_{k,\lambda}(s)
\]
for any fixed~$s$, hence
\[
\limsup_{t \to \infty} \overline{F}_{k,\lambda}(t) \leq (1 + \delta)
\inf_{s > 0} \overline{F}_{k,\lambda}(s) \,,
\]
and therefore
\[
\limsup_{t \to \infty} \overline{F}_{k,\lambda}(t) \leq
\inf_{s > 0} \overline{F}_{k,\lambda}(s) \,,
\]
since $\delta > 0$ is arbitrary.

Because obviously
\[
\liminf_{t \to \infty} \overline{F}_{k,\lambda}(t) \geq
\inf_{s > 0} \overline{F}_{k,\lambda}(s) \,,
\]
we deduce
\[
\lim_{t \to \infty} \overline{F}_{k,\lambda}(t) =
\inf_{s > 0} \overline{F}_{k,\lambda}(s) \,.
\]
\end{proof}

The next proposition provides two useful properties for the constant
$a_{k,\lambda}$ introduced in Proposition~\ref{prop:one}.

\begin{proposition}

For any $\epsilon > 0$,
\[
a_{k,\lambda (1 + \epsilon)} \leq a_{k,\lambda} + \epsilon \,,
\]
and thus
\[
a_{k,\lambda (1 - \epsilon)} \geq a_{k,\lambda} - \frac{\epsilon}{1 - \epsilon} \,.
\]

\end{proposition}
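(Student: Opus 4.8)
The plan is to exploit the superposition property of the Poisson process together with the subadditivity property~(iii). First I would write $\calX_{\lambda(1+\epsilon)}$ as the superposition of two independent processes $\calX_\lambda$ and $\calX'_{\lambda\epsilon}$, of intensities $\lambda$ and $\lambda\epsilon$ respectively. Restricting to $[0,t]^d$ and applying subadditivity to $V_1 = [0,t]^d \cap \calX_\lambda$ and $V_2 = [0,t]^d \cap \calX'_{\lambda\epsilon}$ gives, pointwise on the coupled probability space,
\[
N_{k,1}(V_1 \cup V_2) \leq N_{k,1}(V_1) + N_{k,1}(V_2).
\]
Taking expectations and using $V_1 \cup V_2 \stackrel{{\mathrm d}}{=} [0,t]^d \cap \calX_{\lambda(1+\epsilon)}$ yields
\[
\expect{F_{k,\lambda(1+\epsilon)}(t)} \leq \expect{F_{k,\lambda}(t)} + \expect{F_{k,\lambda\epsilon}(t)}.
\]

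Next I would bound the last term trivially. Since at most all vertices can be colored, $N_{k,r}(V) \leq |V|$, so $\expect{F_{k,\lambda\epsilon}(t)} \leq \lambda\epsilon t^d$ (equivalently $\overline{F}_{k,\lambda\epsilon}(t) \leq 1$). Dividing the previous display by $\lambda(1+\epsilon)t^d$ then gives, for every $t$,
\[
\overline{F}_{k,\lambda(1+\epsilon)}(t) \leq \frac{1}{1+\epsilon}\,\overline{F}_{k,\lambda}(t) + \frac{\epsilon}{1+\epsilon}.
\]
Letting $t \to \infty$ and invoking Proposition~\ref{prop:one} on both sides, I obtain $a_{k,\lambda(1+\epsilon)} \leq \frac{1}{1+\epsilon}\,a_{k,\lambda} + \frac{\epsilon}{1+\epsilon} \leq a_{k,\lambda} + \epsilon$, where the last step uses $a_{k,\lambda} \geq 0$ together with $\frac{1}{1+\epsilon}\leq 1$ and $\frac{\epsilon}{1+\epsilon}\leq\epsilon$. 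This establishes the first inequality (indeed slightly more).

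Finally I would deduce the second inequality purely by a change of variables. Setting $\mu = \lambda(1-\epsilon)$ and $\epsilon' = \epsilon/(1-\epsilon)$, so that $\mu(1+\epsilon') = \lambda$, the first inequality applied to the pair $(\mu,\epsilon')$ reads $a_{k,\lambda} \leq a_{k,\lambda(1-\epsilon)} + \frac{\epsilon}{1-\epsilon}$, which rearranges to the claimed bound $a_{k,\lambda(1-\epsilon)} \geq a_{k,\lambda} - \frac{\epsilon}{1-\epsilon}$.

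I expect no serious obstacle here: the only points requiring care are the rigorous justification of the coupling via Poisson superposition and the bookkeeping of the normalizing factor $\lambda(1+\epsilon)t^d$ versus $\lambda t^d$, which is precisely what produces the factor $1/(1+\epsilon)$ and hence the sharper intermediate estimate. The trivial domination $N_{k,r}(V)\leq|V|$ for the ``extra'' points is what keeps the second term controllable.
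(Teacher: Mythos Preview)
Your proposal is correct and follows essentially the same approach as the paper: Poisson superposition, subadditivity, the trivial bound $N_{k,r}(V)\leq |V|$ on the extra points, then pass to the limit via Proposition~\ref{prop:one}, and finally substitute $\mu=\lambda(1-\epsilon)$, $\epsilon'=\epsilon/(1-\epsilon)$ to get the second inequality. If anything, you are more careful than the paper with the normalization by $\lambda(1+\epsilon)t^d$ versus $\lambda t^d$, which yields your sharper intermediate bound $a_{k,\lambda(1+\epsilon)}\leq \tfrac{1}{1+\epsilon}a_{k,\lambda}+\tfrac{\epsilon}{1+\epsilon}$ before relaxing to $a_{k,\lambda}+\epsilon$.
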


\begin{proof}
By virtue of the subadditivity property and the properties of the
spatial Poisson process, for any $\epsilon > 0$, $t > 0$,
\begin{eqnarray*}
F_{k,\lambda (1 + \epsilon)}(t)
&=&
N_{k,1}([0, t]^d \cap \calX_{\lambda (1 + \epsilon)}) \\
&\stackrel{{\mathrm st}}{=}&
N_{k,1}(([0, t]^d \cap \calX_\lambda) \cup
([0, t]^d \cap \calX'_{\epsilon \lambda})) \\
&\leq_{{\mathrm st}}&
N_{k,1}([0, t]^d \cap \calX_\lambda) +
N_{k,1}([0, t]^d \cap \calX_{\epsilon \lambda}) \\
&\leq&
N_{k,1}([0, t]^d \cap \calX_\lambda) + |[0, t]^d \cap \calX_{\epsilon \lambda}| \,.
\end{eqnarray*}

Taking expectations and dividing by $\lambda t^d$ yields
\[
\overline{F}_{k,\lambda (1 + \epsilon)}(t) \leq
\overline{F}_{k,\lambda}(t) + \epsilon \,.
\]

Taking the limit over $t > 0$, we obtain the statement of the
proposition.
\end{proof}

For any $k \geq 1$, $\nu > 0$, define
$\widetilde{H}_{k, \nu}(n) = H_{k, \nu}(n) / n$.

\begin{proposition}
\label{prop:fkl.convergence}
For any $k \geq 1$, $\nu > 0$,
\[
\lim_{n \to \infty} \expect{\widetilde{H}_{k,\sqrt[d]{\nu / n}}(n)} =
a_{k,\nu} \,.
\]
\end{proposition}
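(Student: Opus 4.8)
The plan is to de-Poissonize the convergence already established in Proposition~\ref{prop:one}. The key is the identity~(\ref{rela1}) together with a careful matching of parameters so that the Poisson point count is centred exactly at~$n$ while the connection radius is frozen at its $n$-dependent value. Concretely, I would set $t = T_n := \sqrt[d]{n / \nu}$ and $\lambda = \nu$ in~(\ref{rela1}). Then $\lambda t^d = n$ and $1/t = \sqrt[d]{\nu / n} =: r_n$, so writing $g_n(l) := \expect{H_{k, r_n}(l)}$ the identity reads
\[
\overline{F}_{k,\nu}(T_n) = \frac{\expect{F_{k,\nu}(T_n)}}{\nu T_n^d}
= \frac{1}{n} \sum_{l = 0}^{\infty} g_n(l)\, \pi(n, l)
= \frac{1}{n} \expect{g_n(\poiss(n))} \,.
\]
Since $T_n \to \infty$ as $n \to \infty$, Proposition~\ref{prop:one} gives $\overline{F}_{k,\nu}(T_n) \to a_{k,\nu}$. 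It therefore suffices to show that replacing the Poisson count $\poiss(n)$ by its mean~$n$ changes $\tfrac{1}{n} \expect{g_n(\poiss(n))}$ by a vanishing amount, because $\tfrac{1}{n} g_n(n) = \expect{\widetilde{H}_{k, r_n}(n)}$ is exactly the quantity in the statement.

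Next I would record two elementary properties of the deterministic sequence $l \mapsto g_n(l)$ (for each fixed~$n$, hence fixed~$r_n$). Using the coupling $\mathcal{I}_l \subseteq \mathcal{I}_{l+1}$ in which $\mathcal{I}_{l+1}$ is obtained from $\mathcal{I}_l$ by adding one further uniform point, monotonicity~(ii) gives $H_{k, r_n}(l) \leq H_{k, r_n}(l+1)$ pointwise, while the elementary bound $N_{k,r}(V \cup \{x\}) \leq N_{k,r}(V) + 1$ (any properly coloured subset of $V \cup \{x\}$ loses at most one vertex upon deleting~$x$) gives $H_{k, r_n}(l+1) \leq H_{k, r_n}(l) + 1$ pointwise. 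Taking expectations, $g_n$ is nondecreasing with increments in $[0,1]$, hence $1$-Lipschitz on the integers: $|g_n(l) - g_n(n)| \leq |l - n|$ for every $l \geq 0$.

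It then remains to invoke concentration of $\poiss(n)$ about its mean. By the Lipschitz bound and the Cauchy--Schwarz inequality,
\[
\left| \expect{g_n(\poiss(n))} - g_n(n) \right|
\leq \expect{|\poiss(n) - n|}
\leq \sqrt{\vari{\poiss(n)}} = \sqrt{n} \,,
\]
whence
\[
\left| \frac{1}{n} \expect{g_n(\poiss(n))} - \expect{\widetilde{H}_{k, r_n}(n)} \right|
\leq \frac{1}{\sqrt{n}} \xrightarrow[n \to \infty]{} 0 \,.
\]
Combining this with $\overline{F}_{k,\nu}(T_n) \to a_{k,\nu}$ yields $\expect{\widetilde{H}_{k, \sqrt[d]{\nu / n}}(n)} \to a_{k,\nu}$, as claimed.

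The argument is essentially routine once the parameters are matched, and the only point requiring care is conceptual rather than technical: the radius $r_n = \sqrt[d]{\nu / n}$ depends on~$n$ but must be held fixed across the summation index~$l$, so $g_n$ has to be treated as a single deterministic nondecreasing $1$-Lipschitz sequence to which the concentration of $\poiss(n)$ around~$n$ is applied. I expect no genuine obstacle beyond verifying that the expected increments of $g_n$ are bounded by~$1$ uniformly, which the one-point coupling supplies.
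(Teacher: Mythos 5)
Your proposal is correct, and it takes a genuinely different route from the paper. The paper also starts from the identity~(\ref{rela1}), but it only exploits the \emph{monotonicity} of $l \mapsto \expect{H_{k,1/t}(l)}$: it perturbs the intensity to $\lambda = \nu(1+\epsilon)$ (resp.\ $\nu(1-\epsilon)$) so that the Poisson count $\poiss(n(1\pm\epsilon))$ sits on the correct side of~$n$ with probability tending to one, obtains $\limsup_n \expect{\widetilde{H}_{k,\sqrt[d]{\nu/n}}(n)} \leq (1+\epsilon)\, a_{k,\nu(1+\epsilon)}$ and the matching $\liminf$ bound, and then must invoke Proposition~2 (the continuity estimate $a_{k,\lambda(1+\epsilon)} \leq a_{k,\lambda} + \epsilon$) to let $\epsilon \downarrow 0$. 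You instead keep $\lambda = \nu$ exactly, so that $\lambda t^d = n$, and de-Poissonize directly: the one-point increment bound $N_{k,r}(V \cup \{x\}) \leq N_{k,r}(V) + 1$ combined with monotonicity makes $g_n(l) = \expect{H_{k,r_n}(l)}$ a nondecreasing $1$-Lipschitz sequence, whence $|\expect{g_n(\poiss(n))} - g_n(n)| \leq \expect{|\poiss(n)-n|} \leq \sqrt{n}$ and the error vanishes after dividing by~$n$. Your argument is self-contained (it bypasses Proposition~2 entirely), is arguably cleaner, and even yields an explicit $O(n^{-1/2})$ bound on the de-Poissonization error; the price is the extra Lipschitz observation, which is in the same spirit as the bound~(\ref{eq:N_one_Lipschitz}) the paper uses later for the Efron--Stein variance estimate, and which you justify correctly via the one-point coupling. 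All steps check out: the parameter matching $1/T_n = \sqrt[d]{\nu/n}$ and $\nu T_n^d = n$ is right, and the appeal to Proposition~\ref{prop:one} along the subsequence $T_n \to \infty$ is legitimate since that proposition gives convergence as $t \to \infty$ through arbitrary real values.
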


\begin{proof} 
The proof relies on the relationship~(\ref{rela1}) and the
monotonicity property of $H_{k,\nu}(n)$, in conjunction with
asymptotic lower and upper bounds.

We first consider the upper bound.

Using~(\ref{rela1}) and observing that $\expect{H_{k,1/t}(l)}$ is
increasing in~$l$, we obtain
\[
\expect{F_{k,\lambda}(t)} \geq
\expect{H_{k,1/t}(n)} \pr{\poiss(\lambda t^d) \geq n}\,,
\]
for any $n \in \natu$.

Taking $t = \sqrt[d]{n / \nu}$ and $\lambda = \nu (1 + \epsilon)$,
we deduce
\[
\expect{F_{k, \nu (1 + \epsilon)}(\sqrt[d]{n / \nu})} \geq
\expect{H_{k,\sqrt[d]{\nu / n}}(n)} \pr{\poiss(n (1 + \epsilon)) \geq n}\,.
\]

Noting that $\pr{\poiss(n (1 + \epsilon)) \geq n} \to 1$
as $n \to \infty$, Proposition~\ref{prop:one} then implies
\[
%\limsup_{n \to \infty} \frac{\expect{H_{k,\sqrt[d]{\nu / n}}(n)}}{n} \leq (1 + \epsilon) \limsup_{n \to \infty} \frac{\expect{F_{k, \nu (1 + \epsilon)}(\sqrt[d]{n / \nu})}}{\nu \sqrt[d]{n / \nu}} = (1 + \epsilon) a_{k, \nu (1 + \epsilon)}.
\limsup_{n \to \infty} \frac{\expect{H_{k,\sqrt[d]{\nu / n}}(n)}}{n} \leq (1 + \epsilon) \limsup_{n \to \infty} \frac{\expect{F_{k, \nu (1 + \epsilon)}(\sqrt[d]{n / \nu})}}{\nu (1+\epsilon) n / \nu} = (1 + \epsilon) a_{k, \nu (1 + \epsilon)}.
\]

Since $\epsilon > 0$ is arbitrary, it then follows from Proposition~2 that
\[
\limsup_{n \to \infty} \frac{\expect{H_{k,\sqrt[d]{\nu / n}}(n)}}{n} \leq
a_{k, \nu} \,.
\]

We now turn to the lower bound.

Invoking~(\ref{rela1}) and noting that $\expect{H_{k,1/t}(l)}$ is
increasing in~$l$, we obtain
\[
\expect{F_{k,\lambda}(t)} \leq \expect{H_{k,1/t}(n)} +
\sum_{l = n}^{\infty} l \pr{\poiss(\lambda t^d) = l},
\]
for any $n \in \natu$.

Choosing $t = \sqrt[d]{n / \nu}$ and $\lambda = \nu (1 - \epsilon)$,
we deduce
\[
\expect{F_{k,\nu (1 - \epsilon)}(\sqrt[d]{n / \nu})} \leq
\expect{H_{k,\sqrt[d]{\nu / n}}(n)} +
\sum_{l = n}^{\infty} l \pr{\poiss(n (1 - \epsilon)) = l}.
\]

Invoking that
$\sum_{l = n}^{\infty} l \pr{\poiss(n (1 - \epsilon)) = l} = \oo(1)$
as $n \to \infty$, it then follows from Proposition~\ref{prop:one} that
\[
%\liminf_{n \to \infty} \frac{\expect{H_{k,\sqrt[d]{\nu / n}}(n)}}{n} \geq (1 - \epsilon) \liminf_{n \to \infty} \frac{\expect{F_{k, \nu (1 - \epsilon)}(\sqrt[d]{n / \nu})}}{\nu \sqrt[d]{n / \nu}} = (1 - \epsilon) a_{k, \nu (1 - \epsilon)}.
\liminf_{n \to \infty} \frac{\expect{H_{k,\sqrt[d]{\nu / n}}(n)}}{n} \geq (1 - \epsilon) \liminf_{n \to \infty} \frac{\expect{F_{k, \nu (1 - \epsilon)}(\sqrt[d]{n / \nu})}}{\nu (1-\epsilon) n / \nu} = (1 - \epsilon) a_{k, \nu (1 - \epsilon)}.
\]

Since $\epsilon > 0$ is arbitrary, it then follows from Proposition~2 that
\[
\liminf_{n \to \infty} \frac{\expect{H_{k,\sqrt[d]{\nu / n}}(n)}}{n} \geq
a_{k, \nu}.
\]
\end{proof}

\iffalse
===START

We now discuss the choice of~$\epsilon$.
The probability
\[
\pr{\poiss(n (1 + \epsilon)) \geq n} \geq
1 - \exp\left( - \frac{n}{2} \epsilon^2 (1+o(1))\right) \,
\]
which is $1-O(n^{-b})$ for any $b>0$
and $\epsilon = \omega\left(\sqrt{2b \log n /n}\right)$.

Moreover,
\[
\sum_{l = n}^{\infty} l \pr{\poiss(n (1 - \epsilon)) = l} =
n \pr{\poiss(n (1 - \epsilon)) \geq n-1} \leq
\exp\left( - \frac{n}{2} \epsilon^2 (1+o(1))\right),
\]
which is $O(n^{-b})$, again, for any $b>0$
and $\epsilon = \omega\left(\sqrt{2b \log n /n}\right)$.

The proof of Proposition~\ref{prop:fkl.convergence} still holds for
$\epsilon = \omega\left(\sqrt{2b \log n /n}\right)$ where $b>0$. 

===END
\fi

For conciseness, denote $\mu(t) = \expect{F_{k, \lambda}(t)}$
and $\sigma^2(t) = \vari{F_{k, \lambda}(t)}$.

\begin{lemma}
\label{lemma:sigma.lb}
For all $\lambda, d, k$,
\[
%m_{k, \lambda}^d = 
\inf_{t > 3} \frac{\sigma^2(t)}{t^d} > 0 \,.
\]
\end{lemma}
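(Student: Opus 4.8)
The plan is to inject $\Theta(t^d)$ spatially separated and (after conditioning) independent sources of randomness, and to show that each contributes a fixed positive amount to the variance via a martingale/conditioning argument, exploiting the near-superadditivity property~(iv) to make a local modification affect the global optimum by an exactly computable amount.

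First I would fix the geometry. Partition $[0,t]^d$ into $m=\lfloor t/3\rfloor^d$ subcubes of side $3$; for $t>3$ one has $\lfloor t/3\rfloor\ge t/6$, so $m\ge (t/6)^d=\Theta(t^d)$. In the $j$-th subcube place a central ball $B_j$ of radius $\rho=1/4$, and let $S_j=\{x:\mathrm{dist}(x,B_j)\le 1\}\setminus B_j$ be the surrounding shell of width $1$. Since $\rho+1=5/4<3/2$, each $B_j\cup S_j$ lies inside its own subcube, so the regions $\{B_j\cup S_j\}_j$ are pairwise disjoint; and since $2\rho=1/2\le 1$, the points of $\calX_\lambda\cap B_j$ form a clique in $G_1$, whence $N_{k,1}(\calX_\lambda\cap B_j)=\min(|\calX_\lambda\cap B_j|,k)$.

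Next comes the key local splitting. Let $W_j$ be the event $\{S_j\cap\calX_\lambda=\emptyset\}$, so $\pr{W_j}=e^{-\lambda\,\mathrm{vol}(S_j)}=:q>0$, the same for all $j$. On $W_j$, every $G_1$-neighbour of a point of $\calX_\lambda\cap B_j$ lies in $B_j\cup S_j$, hence in $B_j$, so $\calX_\lambda\cap B_j$ sits at distance $>1$ from the rest of the configuration. Combining subadditivity~(iii) with near-superadditivity~(iv) then yields the exact identity, valid on $W_j$,
\[
F_{k,\lambda}(t)=\min(|\calX_\lambda\cap B_j|,k)+N_{k,1}\bigl(([0,t]^d\cap\calX_\lambda)\setminus B_j\bigr),
\]
in which the remainder term does not depend on $\calX_\lambda\cap B_j$. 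Now let $\mathcal H$ be the $\sigma$-algebra generated by $\calX_\lambda$ restricted to $[0,t]^d\setminus\bigcup_j B_j$; then each $W_j$ is $\mathcal H$-measurable and, given $\mathcal H$, the restrictions $\calX_\lambda\cap B_1,\dots,\calX_\lambda\cap B_m$ are independent. Revealing these balls one at a time gives a conditional martingale with differences $D_j$, so $\var{F_{k,\lambda}(t)\mid\mathcal H}=\sum_j\expect{D_j^2\mid\mathcal H}$ and hence $\sigma^2(t)\ge\expect{\var{F_{k,\lambda}(t)\mid\mathcal H}}=\sum_j\expect{D_j^2}$. On $W_j$ the displayed identity forces $D_j=\min(|\calX_\lambda\cap B_j|,k)-\bar c$ with $\bar c=\expect{\min(\poiss(\mu),k)}$ and $\mu=\lambda\,\mathrm{vol}(B_j)$, since the remainder term is independent of $\calX_\lambda\cap B_j$ and so is unchanged by revealing it. As $W_j$ depends only on $S_j$ and is independent of $|\calX_\lambda\cap B_j|$,
\[
\expect{D_j^2}\ge\expect{D_j^2\,\ind{W_j}}=q\,\var{\min(\poiss(\mu),k)}=:qv,
\]
where $v>0$ because $\min(\poiss(\mu),k)$ is non-degenerate for $k\ge1$ and $\mu>0$. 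Summing over $j$ gives $\sigma^2(t)\ge mqv\ge qv\,6^{-d}t^d$ uniformly in $t>3$, which is the assertion.

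The step I would be most careful about is the exact local splitting together with the identity $D_j=\min(|\calX_\lambda\cap B_j|,k)-\bar c$ on $W_j$: one must check that the isolation event makes the global optimum genuinely additive, so that (iii) and (iv) combine to an equality rather than a mere inequality, and that—because $W_j$ is $\mathcal H$-measurable while $\calX_\lambda\cap B_j$ is independent of the conditioning—revealing $B_j$ alters only the local clique term and leaves the conditional expectation of the remainder untouched. The remaining ingredients (the Poisson void probability $q$, the count $m=\Theta(t^d)$, and the positivity of $v$) are routine.
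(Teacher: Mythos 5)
Your proof is correct and follows essentially the same route as the paper: $\Theta(t^d)$ well-separated small balls whose unit-width shells are empty with probability $q>0$, the clique structure forcing the local contribution to be $\min(|\calX_\lambda\cap B_j|,k)$, conditional independence of the ball contents, and a variance decomposition yielding $\sigma^2(t)\geq q\,\var{\min(\poiss(\mu),k)}\,\lfloor t/3\rfloor^d$. The only difference is cosmetic: you condition on the configuration outside the balls and use a martingale-difference expansion, whereas the paper conditions on the set of isolated balls and expands the square directly; the resulting bound and constants are the same.
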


\begin{proof}
Let $C^d(t) = \left(\lfloor\frac{t}{3}\rfloor\right)^d$ and let
$\calV = v_1, v_2, \dots, v_{C^d(t)}$ be a set of points in $[0, t]^d$
such that no two points are within distance~3 and no point is within
distance $3 / 2$ from the boundary.
For each point~$v_i$ consider two spheres with radii $1 / 2$ and $3 / 2$
centered on~$v_i$, along with the shell of unit width consisting of the
area covered by the larger sphere but not by the smaller sphere.
Let $\calW \subseteq \calV$ be the set of points for which the associated
shells do not contain any points of the Poisson point process $\calX_\lambda$.
Note that
\[
\pr{\calW} = q^{|\calW|} (1 - q)^{C^d(t) - |\calW|}\,,
\]
with
\[
q = \ee^{- \lambda (V^d(3 / 2) - V^d(1 / 2))} > 0\,,
\]
and
\[
V^d(r) = \frac{\pi^{d / 2} r^d}{\Gamma(d / 2 + 1)}
\]
denoting the volume of a sphere of radius~$r$ in $d$~dimensions.
Given a maximum proper coloring with $k$ colors of $[0, t]^d \cap \calX_\lambda$,
let $D_i \in \{0, 1, \dots, k\}$ be the number of colored points covered
by the smaller sphere around~$v_i$
and $D_0 = F_{k, \lambda}(t) - \sum_{i \in \calW} D_i$.
We may write
\[
\sigma^2(t) = \expect{(F_{k, \lambda}(t) - \mu(t))^2} =
\sum_{\calW \subseteq \{1, \dots, C^d(t)\}}
\expect{(F_{k, \lambda}(t) - \mu(t))^2 | \calW} \pr{\calW}.
\]
Now observe that
\[
D_i \stackrel{{\mathrm st}}{=} \min\{k, \poiss(\lambda V^d(1 / 2))\},
\]
for all $i \in \calW$, and that $D_0$ and $D_i$, $i \in \calW$,
are all mutually independent conditioned on $\calW$.
Denoting $\mu_k = \expect{\min\{k, \poiss(\lambda V^d(1 / 2))\}}$,
and $\sigma_k^2 = \vari{\min\{k, \poiss(\lambda V^d(1 / 2))\}}$, we deduce
\begin{eqnarray*}
\expect{\left(F_{k, \lambda}(t) - \mu(t) \right)^2 | \calW}
&=&
\expect{\left(D_0 + \sum_{i \in \calW} D_i - \mu(t)\right)^2 | \calW} \\
&=&
\expect{\left(D_0 - (\mu(t) - |\calW| \mu_k) +
\sum_{i \in \calW} (D_i - \mu_k)\right)^2 | \calW} \\
&=&
\expect{\left(D_0 - \left(\mu(t) - |\calW| \mu_k \right) \right)^2 | \calW} +
\sum_{i \in \calW} \expect{\left(D_i - \mu_k \right)^2 | \calW} \\
&\geq&
|\calW| \sigma_k^2 \,.
\end{eqnarray*}
Combining the above properties yields
\[
\sigma^2(t) \geq 
\sum_{\calW \subseteq \{1, \dots, C^d(t)\}} |\calW| \sigma_k^2
q^{|\calW|} (1 - q)^{C^d(t) - |\calW|} =
\sigma_k^2 \sum_{m = 0}^{C^d(t)}
\left(\begin{array}{c} m \\ C^d(t)\end{array}\right) m q^m (1 - q)^{C^d(t) - m} =
q C^d(t) \sigma_k^2 \,.
\]
Since $q > 0$, $\sigma_k^2 > 0$, regardless of~$t$, and
$C^d(t) \geq (t / 6)^d$ for all $t > 3$, the statement of the lemma follows
%with $M_{k, \lambda}^d = q \sigma_k^2 / 6^d$.
with 
\begin{equation}
\inf_{t > 3} \frac{\sigma^2(t)}{t^d} \geq q \sigma_k^2 / 6^d > 0\,.
\end{equation}
\end{proof}

\begin{lemma}
\label{lemma:sigma.H.ub}
For any $n \in \natu$,
\[
\var{H_{k, \sqrt[d]{\nu / n}}(n)} \leq n/2 \,.
\]
\end{lemma}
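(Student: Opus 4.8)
The plan is to view $H_{k,\sqrt[d]{\nu/n}}(n) = N_{k,r}(\{X_1,\dots,X_n\})$, with $r = \sqrt[d]{\nu/n}$, as a function of the $n$ independent uniform points $X_1,\dots,X_n$ in $[0,1]^d$ that constitute $\mathcal{I}_n$, and to invoke the Efron-Stein inequality. Letting $X_i'$ be an independent copy of $X_i$, writing $f = N_{k,r}(\{X_1,\dots,X_n\})$ and letting $f_i'$ denote the value obtained after replacing $X_i$ by $X_i'$, the Efron-Stein inequality gives
\[
\var{H_{k,r}(n)} \leq \frac{1}{2} \sum_{i=1}^n \expect{(f - f_i')^2} \,.
\]
Everything then reduces to a deterministic bounded-difference estimate for the summands.

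The key step I would establish is that $|f - f_i'| \leq 1$ almost surely. To see this, fix all points except the $i$-th, set $V_{-i} = \{X_j : j \neq i\}$, and regard $g(x) = N_{k,r}(V_{-i} \cup \{x\})$ as a function of a single point~$x$. Monotonicity (property~(ii)) yields $g(x) \geq N_{k,r}(V_{-i})$, while subadditivity (property~(iii)) gives $g(x) \leq N_{k,r}(V_{-i}) + N_{k,r}(\{x\}) = N_{k,r}(V_{-i}) + 1$, since a single point is always colorable. Hence $g$ takes only the two values $N_{k,r}(V_{-i})$ and $N_{k,r}(V_{-i}) + 1$, so that $|f - f_i'| = |g(X_i) - g(X_i')| \leq 1$ deterministically.

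Combining the two ingredients, each summand obeys $\expect{(f - f_i')^2} \leq 1$, and summing over the $n$ coordinates yields $\var{H_{k,r}(n)} \leq n/2$, as desired.

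The one point deserving care is the sharpening of the bounded difference from the naive value $2$ --- obtained by passing through $N_{k,r}(V_{-i})$ via the triangle inequality --- to the correct value $1$; this is precisely what delivers the target bound $n/2$ in place of a larger multiple of~$n$. The hard part, such as it is, lies in observing that $g$ ranges over a two-element set, after which the Efron-Stein step and the final arithmetic are entirely routine.
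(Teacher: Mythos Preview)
Your proposal is correct and follows essentially the same approach as the paper: establish that $N_{k,r}$ is one-Lipschitz under replacement of a single point, then apply the Efron--Stein inequality to obtain the $n/2$ bound. Your justification of the one-Lipschitz property via monotonicity and subadditivity is in fact more explicit than the paper's, which simply asserts the bound ``by the nature of a maximum proper coloring with $k$ colors.''
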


\begin{proof}

Recall that ${\mathcal I}_n$ is a collection of $n$~points uniformly
and independently distributed in the unit cube $[0, 1]^d$.
Let ${\mathcal I}_n^i$ be a collection of $n$~points obtained
from ${\mathcal I}_n$ by replacing (only) the $i$-th point $x_i$
by $x'_{i} \in [0,1]^d$.
By definition, $H_{k,r}(n)=N_{k,r}({\mathcal I}_n)$. 
Moreover, $N_{k,r}({\mathcal I}_n)$ is one-Lipschitz
\begin{equation}
\label{eq:N_one_Lipschitz}
\left| N_{k,r}({\mathcal I}_n) - N_{k,r}({\mathcal I}^{i}_n) \right| \leq 1 \,, 
\end{equation}
by the nature of a maximum proper coloring with $k$ colors.
Now the one-Lipschitz property of $N_{k,r}({\mathcal I}_n)$ and the
Efron-Stein bound~\cite{efron-1981-jackknife} yield the assertion
of the lemma
\begin{eqnarray*}
\var{H_{k, \sqrt[d]{\nu / n}}(n)} &=&  \var{N_{k, \sqrt[d]{\nu / n}}({\mathcal I}_n)} \\
&\leq& \frac{1}{2} \expect{ \sum_{i=1}^n \left(N_{k, \sqrt[d]{\nu / n}} \left({\mathcal I}_n\right) - N_{k, \sqrt[d]{\nu / n}}\left({\mathcal I}^{i}_n\right) \right)^2} \\ 
&\leq& n/2 \,.
\end{eqnarray*}

\end{proof}

\begin{lemma}
\label{lemma:sigma.ub.1}
For any $\lambda, d, k$ and for every $\eps>0$ and sufficiently large $t$,
\[
\sigma^2(t) \leq (1+\eps) \lm t^d /2 \,.
\]
\end{lemma}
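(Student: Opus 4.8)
The plan is to exploit the distributional identity $F_{k,\lm}(t) \stackrel{{\mathrm st}}{=} H_{k,1/t}(\poiss(\lm t^d))$ together with the de-Poissonized variance bound of Lemma~\ref{lemma:sigma.H.ub}, via the law of total variance. Writing $N = \poiss(\lm t^d)$ and $g(n) = \expect{H_{k,1/t}(n)}$, I would decompose
\[
\sigma^2(t) = \vari{H_{k,1/t}(N)} = \expect{\vari{H_{k,1/t}(N) \mid N}} + \vari{g(N)} \,.
\]
The first (within-count) term is the Poisson average of the fixed-count variances. Note that the Efron--Stein argument in Lemma~\ref{lemma:sigma.H.ub} uses only the one-Lipschitz property~(\ref{eq:N_one_Lipschitz}), which holds for every radius; hence $\vari{H_{k,1/t}(n)} \leq n/2$ for all $n$, and therefore $\expect{\vari{H_{k,1/t}(N) \mid N}} \leq \expect{N}/2 = \lm t^d / 2$. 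This already furnishes the leading term with exactly the constant $\lm/2$ appearing in the statement.

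The remaining task is to show that the second (between-count) term is of lower order, $\vari{g(N)} = \oo(t^d)$, so that it is absorbed into the factor $(1+\eps)$ for $t$ large. Here I would first record the crude Lipschitz bound: since adding a single point changes $H_{k,1/t}$ by at most one, $g$ is monotone and one-Lipschitz, so $|g(N) - g(m)| \leq |N - m|$ for every integer $m$; taking $m = \lfloor \lm t^d \rfloor$, a direct second-moment estimate gives $\vari{g(N)} \leq \expect{(N - m)^2} = \lm t^d + O(1)$. To improve this from $O(t^d)$ to $\oo(t^d)$ I would split the contribution of $N$ into the bulk $|N - \lm t^d| \leq \dl \lm t^d$ and the tail, control the tail by standard Poisson concentration, and on the bulk replace the worst-case Lipschitz constant by the actual small typical increments $g(n+1) - g(n)$ on the relevant scale.

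The main obstacle is precisely this between-count term. The one-Lipschitz property alone yields only $\vari{g(N)} \leq \vari{N} = \lm t^d$, which is of the \emph{same} order as the within-count term rather than negligible; obtaining $\oo(t^d)$ requires genuine quantitative information about the slope of $g$ near $\lm t^d$ and about the joint fluctuations of the point count and the coloring functional, rather than the worst-case increment. I would therefore expect the heart of the argument to be a quantitative de-Poissonization statement showing that the fluctuation of $N$ about its mean contributes at strictly lower order to $\vari{F_{k,\lm}(t)}$ than the fixed-count fluctuation already captured by Lemma~\ref{lemma:sigma.H.ub}; verifying this carefully, rather than the within-count bound, is where the real work lies.
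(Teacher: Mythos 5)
Your decomposition is the same one the paper intends: condition on the Poisson count $N=\poiss(\lm t^d)$, apply the law of total variance, and bound the within-count term $\expect{\vari{H_{k,1/t}(N)\mid N}}\leq \expect{N}/2=\lm t^d/2$ via Lemma~\ref{lemma:sigma.H.ub}. That part is correct and matches the paper, whose own proof is only a two-line sketch of exactly this conditioning. The gap you flag yourself --- controlling the between-count term $\vari{g(N)}$ with $g(n)=\expect{H_{k,1/t}(n)}$ --- is real, and your proposed repair cannot work: the plan to show $\vari{g(N)}=\oo(t^d)$ by exploiting ``small typical increments'' of $g$ fails because those increments are \emph{not} small. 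Since $g(n)/n$ converges to $a_{k,\lm}>0$ (Proposition~\ref{prop:fkl.convergence}) and $g$ is monotone and one-Lipschitz, the increments $g(n+1)-g(n)$ have Ces\`aro mean tending to $a_{k,\lm}$, so near $n\approx\lm t^d$ the slope of $g$ is of order $a_{k,\lm}$, and $\vari{g(N)}$ is of order $a_{k,\lm}^2\,\lm t^d=\Theta(t^d)$, not $\oo(t^d)$. A concrete check: for $d=1$, $k=1$ and small $\lm$, almost every point is isolated, $F_{k,\lm}(t)$ is within a lower-order correction of $|\calX_\lm\cap[0,t]|$, and $\sigma^2(t)\to\lm t\,(1-O(\lm))$, which exceeds $(1+\eps)\lm t/2$. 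So the ``quantitative de-Poissonization'' you hope for is not merely the hard part of the argument; it is false, and the constant $1/2$ in the statement is not attainable by this route.

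What this route does deliver is $\sigma^2(t)\leq \expect{N}/2+\vari{N}+\oo(t^d)\leq(1+\eps)\tfrac{3}{2}\lm t^d$ for large $t$, using only the one-Lipschitz bound $|g(N)-g(m)|\leq|N-m|$ that you already wrote down. That weaker bound is all that is actually used downstream: Lemma~\ref{lemma:sigma.lub} needs only $\sigma^2(t)=O(t^d)$, and the Chebyshev step~(\ref{chebyshev1}) in Proposition~\ref{prop:lln1} needs only $\vari{\widetilde F_{k,\lm}(s)}=O(1/(\lm s^d))$; both survive with the constant $3/2$ in place of $1/2$. So you should state and prove the $O(t^d)$ version rather than chase the $(1+\eps)\lm t^d/2$ bound; the same criticism applies to the paper's own proof, which is silent about the between-count term altogether.
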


\begin{proof}
Conditioning on the number of points in $\calX_{\lm} \cap [0,t]^d$,
which is a Poisson random variable and concentrates around its mean
$\lm t^d$, see~\cite{penrose:book}, the proof follows by using the
bound on the variance of $H_{k, \sqrt[d]{\nu / n}}(n)$ provided
by Lemma~\ref{lemma:sigma.H.ub}.
\end{proof}

\begin{lemma}
\label{lemma:sigma.lub}
For all $\lambda>0$ and $d, k \in \N$, asymptotically as $t$ tends to $\infty$, we have 
\begin{equation}
\nonumber
\sigma^2(t) = \Theta(t^d)\,.
\end{equation}
\end{lemma}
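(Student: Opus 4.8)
The plan is to assemble the two-sided bound directly from the variance estimates already in hand, namely the lower bound of Lemma~\ref{lemma:sigma.lb} and the upper bound of Lemma~\ref{lemma:sigma.ub.1}. The $\Omega(t^d)$ half is immediate: Lemma~\ref{lemma:sigma.lb} asserts $\inf_{t > 3} \sigma^2(t)/t^d > 0$, so there is a constant $c > 0$ (one may take $c = q \sigma_k^2 / 6^d$ from the proof of that lemma) with $\sigma^2(t) \geq c\, t^d$ for every $t > 3$, and in particular for all large~$t$.

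For the matching $O(t^d)$ bound, I would invoke Lemma~\ref{lemma:sigma.ub.1} with any fixed choice of $\eps$, say $\eps = 1$, which gives $\sigma^2(t) \leq \lm t^d$ for all sufficiently large~$t$. Hence there exist a threshold $t_0$ and a constant $C = \lm$ with $\sigma^2(t) \leq C\, t^d$ for all $t \geq t_0$. Combining the two estimates produces constants $0 < c \leq C < \infty$ and a threshold $t_0$ such that $c\, t^d \leq \sigma^2(t) \leq C\, t^d$ for all $t \geq t_0$, which is precisely the claim $\sigma^2(t) = \Theta(t^d)$ as $t \to \infty$.

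The only point requiring attention, and it is genuinely minor, is that the two input lemmas hold under the same hypotheses ($\lm > 0$, $d,k \in \N$ fixed) and over overlapping ranges of~$t$ (Lemma~\ref{lemma:sigma.lb} for $t > 3$, Lemma~\ref{lemma:sigma.ub.1} for sufficiently large~$t$), so their conjunction is valid on a common tail $t \geq t_0$. I do not expect any real obstacle here: all the substantive work—the shell construction and conditional variance lower bound for Lemma~\ref{lemma:sigma.lb}, and the Efron--Stein/Lipschitz argument together with Poisson concentration for Lemma~\ref{lemma:sigma.ub.1}—has already been carried out, and Lemma~\ref{lemma:sigma.lub} is simply the bookkeeping statement that records their conjunction.
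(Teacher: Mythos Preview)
Your proposal is correct and follows exactly the paper's own approach: the paper's proof of Lemma~\ref{lemma:sigma.lub} is the single sentence ``The proof follows from Lemmas~\ref{lemma:sigma.lb} and~\ref{lemma:sigma.ub.1},'' and your write-up simply unpacks that combination with explicit constants and thresholds.
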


\begin{proof}
The proof follows from Lemmas~\ref{lemma:sigma.lb} and~\ref{lemma:sigma.ub.1}.
\end{proof}

\section{Convergence of functionals $\widetilde{F}_{k, \lambda}(t)$
and $\widetilde{H}_{k,\sqrt[d]{\nu / n}}(n)$}
\label{sec:lln}

The next two propositions establish convergence in probability,
i.e., a weak law-of-large-numbers result for the two functionals
$\widetilde{F}_{k, \lambda}(t)$ and $\widetilde{H}_{k,\sqrt[d]{\nu / n}}(n)$,
and a strong law-of-large-numbers result (almost-sure convergence)
of $\widetilde{H}_{k,\sqrt[d]{\nu / n}}(n)$.

\begin{proposition}
\label{prop:H.conv.in.prob}
For any $k \geq 1$, $\nu > 0$, the random variable
$\widetilde{H}_{k,\sqrt[d]{\nu / n}}(n)$ almost surely converges
to $a_{k,\nu}$ as $n \to \infty$.
\end{proposition}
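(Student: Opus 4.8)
The plan is to combine the convergence of the mean established in Proposition~\ref{prop:fkl.convergence} with an exponential concentration inequality and the first Borel--Cantelli lemma. First I would fix a common probability space by taking $x_1, x_2, \dots$ to be i.i.d.\ uniform on $[0,1]^d$ and setting ${\mathcal I}_n = \{x_1, \dots, x_n\}$, so that for each~$n$ the random variable $H_{k,\sqrt[d]{\nu / n}}(n) = N_{k,\sqrt[d]{\nu / n}}({\mathcal I}_n)$ is a deterministic function of the first $n$ coordinates. This coupling is what makes almost-sure convergence (rather than mere convergence in probability) a meaningful statement across the varying index~$n$.

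The key ingredient is that, for each fixed~$n$, the map $(x_1, \dots, x_n) \mapsto N_{k,\sqrt[d]{\nu / n}}({\mathcal I}_n)$ satisfies the bounded-differences condition with all constants equal to~$1$; this is precisely the one-Lipschitz property~(\ref{eq:N_one_Lipschitz}) already used in the proof of Lemma~\ref{lemma:sigma.H.ub}. The McDiarmid (Azuma--Hoeffding) bounded-differences inequality therefore yields, for every $\eta > 0$,
\[
\pr{\left| H_{k,\sqrt[d]{\nu / n}}(n) - \expect{H_{k,\sqrt[d]{\nu / n}}(n)} \right| \geq \eta} \leq 2 \ee^{-2 \eta^2 / n} \,.
\]
Taking $\eta = \eps n$ and dividing through by~$n$ gives
\[
\pr{\left| \widetilde{H}_{k,\sqrt[d]{\nu / n}}(n) - \expect{\widetilde{H}_{k,\sqrt[d]{\nu / n}}(n)} \right| \geq \eps} \leq 2 \ee^{-2 \eps^2 n} \,,
\]
and the right-hand side is summable over $n \in \natu$ for every fixed $\eps > 0$.

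By the first Borel--Cantelli lemma, this summability implies that $\widetilde{H}_{k,\sqrt[d]{\nu / n}}(n) - \expect{\widetilde{H}_{k,\sqrt[d]{\nu / n}}(n)} \to 0$ almost surely. Since Proposition~\ref{prop:fkl.convergence} gives $\expect{\widetilde{H}_{k,\sqrt[d]{\nu / n}}(n)} \to a_{k,\nu}$, combining the two yields $\widetilde{H}_{k,\sqrt[d]{\nu / n}}(n) \to a_{k,\nu}$ almost surely, which is the assertion of the proposition.

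The step I expect to require the most care is the concentration bound itself, specifically the observation that the bounded-differences constant equals~$1$ \emph{uniformly} in both $n$ and the radius $r = \sqrt[d]{\nu / n}$. This uniformity is exactly what makes the exponential tail in the second display summable. By contrast, the Efron--Stein variance bound of Lemma~\ref{lemma:sigma.H.ub} only produces, via Chebyshev's inequality, a tail of order $1/n$, which is not summable and would force a more delicate argument along a subsequence (e.g.\ $n_j = j^2$) together with a monotonicity-based interpolation to fill the gaps --- awkward here precisely because both the number of points and the connection radius vary with~$n$. Invoking the exponential concentration afforded by the one-Lipschitz property sidesteps this difficulty entirely.
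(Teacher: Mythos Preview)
Your proposal is correct and follows essentially the same route as the paper: both exploit the one-Lipschitz property~(\ref{eq:N_one_Lipschitz}) to obtain a McDiarmid-type exponential concentration bound (the paper cites the Molloy--Reed ``simple concentration bound'' with exponent $-\eps^2/2n$, you use the sharper $-2\eps^2/n$), combine it with the convergence of the mean from Proposition~\ref{prop:fkl.convergence}, and finish via summability and Borel--Cantelli. Your explicit mention of the common probability space and your remark that Efron--Stein plus Chebyshev would not suffice are useful clarifications the paper leaves implicit.
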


The first part of the proof below establishes the convergence
in probability, while the second part concludes with the almost-sure
convergence. 

\begin{proof}
We first show the convergence in probability.
We have that $N_{k, r}(V)$ is one-Lipschitz, see~\ref{eq:N_one_Lipschitz}.\
Hence a `simple concentration bound', see p.79 (Section 10.1)
of~\cite{molloy-reed-book-2002}, implies that for any $\eps \geq 0$
\[
\pr{\left|H_{k, \sqrt[d]{\nu / n}}(n) - 
\expect{H_{k, \sqrt[d]{\nu / n}}(n)}\right| \geq \eps} \leq 2 \ee^{- \eps^2 /2n} \,.
\] 
Moreover, for any $\delta > 0$, Proposition~\ref{prop:fkl.convergence}
implies that there exists an $n = n_\delta$ such that
\[
\left|\expect{\widetilde{H}_{k, \sqrt[d]{\nu / n}}(n)} - a_{k, \lambda} \right| \leq
\frac{1}{2} \delta
\]
for all $n \geq n_\delta$.
Thus, for all $n \geq n_\delta$,
\begin{eqnarray}
\label{eq:exp.bound.conv.prob}
\nonumber
\pr{\left|\widetilde{H}_{k,\sqrt[d]{\nu / n}}(n) - a_{k,\nu}\right| > \delta}
&\leq&
\pr{\left|\widetilde{H}_{k,\sqrt[d]{\nu / n}}(n) - 
\expect{\widetilde{H}_{k, \sqrt[d]{\nu / n}}(n)}\right| > \frac{1}{2} \delta} \\
\nonumber
&=&
\pr{\left|H_{k,\sqrt[d]{\nu / n}}(n) - 
\expect{H_{k, \sqrt[d]{\nu / n}}(n)}\right| > \frac{n}{2} \delta} \\
&\leq&
2 \ee^{- n \delta^2 / 2} \,.
\end{eqnarray}
For any $\delta > 0$, the latter term tends to~$0$ as $n$ tends to $\infty$,  
which implies the convergence in probability of $\widetilde{H}_{k,\sqrt[d]{\nu / n}}(n)$.

We now proceed to establish the almost-sure convergence.
From~(\ref{eq:exp.bound.conv.prob}), for any $\delta > 0$ and the corresponding finite $n_\delta$, it follows:
\begin{equation}
\sum_{n=1}^\infty
\pr{\left|\widetilde{H}_{k,\sqrt[d]{\nu / n}}(n) - a_{k,\nu}\right| > \delta}
\leq
n_\dl + 2 \sum_{n=n_\dl}^\infty \ee^{- n \delta^2 / 2} < \infty \,,
\end{equation}
which implies the almost-sure convergence of $\widetilde{H}_{k,\sqrt[d]{\nu / n}}(n)$.
\end{proof}

Given Proposition~\ref{prop:H.conv.in.prob}, we prove the convergence
in probability of $\widetilde{F}_{k,\lambda}(t)$.

\begin{proposition}
\label{prop:lln1}
For any $k \geq 1$, $\lambda > 0$, the random variable
$\widetilde{F}_{k,\lambda}(t)$ converges to $a_{k, \lambda}$
in probability as $t \to \infty$.
\end{proposition}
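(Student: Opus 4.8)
The plan is to transfer the almost-sure convergence of $\widetilde{H}$ established in Proposition~\ref{prop:H.conv.in.prob} to $\widetilde{F}_{k,\lambda}(t)$ by de-Poissonization, exploiting the distributional identity $F_{k,\lambda}(t) \stackrel{\mathrm{st}}{=} H_{k,1/t}(\poiss(\lambda t^d))$ recorded before~(\ref{rela1}). Write $L = \poiss(\lambda t^d)$, so $\widetilde{F}_{k,\lambda}(t) \stackrel{\mathrm{st}}{=} H_{k,1/t}(L)/(\lambda t^d)$. Since convergence in probability to a constant is equivalent to convergence in distribution to that constant, and the latter depends only on the marginal law, it suffices to prove the statement for a convenient coupled realization: I take a single infinite i.i.d.\ sequence $x_1,x_2,\dots$ of uniform points (so ${\mathcal I}_m = \{x_1,\dots,x_m\}$ is nested) with $L$ independent. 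The two ingredients are the concentration of $L$ around its mean $\lambda t^d$, and the monotonicity of $N_{k,r}$ both in the point set and in the radius~$r$ (property~(ii)).

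For the upper bound, fix $\epsilon \in (0,1)$, set $n_+ = \lceil (1+\epsilon)\lambda t^d\rceil$, and note $\pr{L > n_+} \to 0$. On $\{L \le n_+\}$ the nesting gives $H_{k,1/t}(L) \le H_{k,1/t}(n_+)$ by monotonicity in the point set. Because $\sqrt[d]{(1+\epsilon)\lambda / n_+} \le 1/t$ and $N_{k,r}$ is decreasing in~$r$, I enlarge the radius to the value that pins the intensity at $(1+\epsilon)\lambda$, obtaining $H_{k,1/t}(n_+) \le H_{k,\sqrt[d]{(1+\epsilon)\lambda/n_+}}(n_+) = n_+\, \widetilde{H}_{k,\sqrt[d]{(1+\epsilon)\lambda/n_+}}(n_+)$. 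As $t \to \infty$ we have $n_+ \to \infty$ with the \emph{fixed} intensity $\nu = (1+\epsilon)\lambda$, so Proposition~\ref{prop:H.conv.in.prob} yields $\widetilde{H}_{k,\sqrt[d]{(1+\epsilon)\lambda/n_+}}(n_+) \to a_{k,(1+\epsilon)\lambda}$ almost surely, while $n_+/(\lambda t^d) \to 1+\epsilon$. Hence the dominating quantity converges to $(1+\epsilon)a_{k,(1+\epsilon)\lambda}$; invoking $a_{k,(1+\epsilon)\lambda} \le a_{k,\lambda} + \epsilon$ and letting $\epsilon \downarrow 0$ shows $\pr{\widetilde{F}_{k,\lambda}(t) > a_{k,\lambda} + \eta} \to 0$ for every $\eta > 0$.

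The lower bound is symmetric. With $n_- = \lfloor (1-\epsilon)\lambda t^d\rfloor$ and $\pr{L < n_-} \to 0$, monotonicity in the point set gives $H_{k,1/t}(L) \ge H_{k,1/t}(n_-)$ on $\{L \ge n_-\}$, and now $\sqrt[d]{(1-\epsilon)\lambda/n_-} \ge 1/t$ lets me shrink to a fixed intensity with the opposite radius-monotonicity inequality, $H_{k,1/t}(n_-) \ge n_-\, \widetilde{H}_{k,\sqrt[d]{(1-\epsilon)\lambda/n_-}}(n_-) \to (1-\epsilon)a_{k,(1-\epsilon)\lambda}$. Using $a_{k,(1-\epsilon)\lambda} \ge a_{k,\lambda} - \epsilon/(1-\epsilon)$ and letting $\epsilon \downarrow 0$ gives $\pr{\widetilde{F}_{k,\lambda}(t) < a_{k,\lambda} - \eta} \to 0$. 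Combining the two bounds delivers convergence in probability to $a_{k,\lambda}$.

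The step I expect to be the crux is the reconciliation of the \emph{fixed} radius $1/t$ appearing in the de-Poissonized $H_{k,1/t}(\cdot)$ with the \emph{scaled} radius $\sqrt[d]{\nu/n}$ required by Proposition~\ref{prop:H.conv.in.prob}: the count $L$ is random and only asymptotically equal to $\lambda t^d$, so the effective intensity $L/t^d$ fluctuates. The monotonicity of $N_{k,r}$ in~$r$ is precisely what lets me pin the intensity to the fixed values $(1\pm\epsilon)\lambda$ at the cost of an $\epsilon$-distortion, which is then absorbed by the continuity estimates for $a_{k,\lambda}$ from the preceding proposition. (A shorter alternative avoids $H$ altogether: since $\overline{F}_{k,\lambda}(t) \to a_{k,\lambda}$ by Proposition~\ref{prop:one} and $\vari{\widetilde{F}_{k,\lambda}(t)} = \sigma^2(t)/(\lambda t^d)^2 = O(t^{-d}) \to 0$ by Lemma~\ref{lemma:sigma.ub.1}, a direct application of Chebyshev's inequality also yields the claim.)
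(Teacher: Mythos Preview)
Your argument is correct. Both your primary de-Poissonization route and your parenthetical Chebyshev shortcut are valid, and both differ from the paper's proof.

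The paper argues directly from the subadditive/near-superadditive block decomposition~(\ref{ub2})--(\ref{lb2}): it fixes a scale~$s$ with $\overline{F}_{k,\lambda}(s)$ close to $a_{k,\lambda}$ (by Proposition~\ref{prop:one}), bounds $\widetilde{F}_{k,\lambda}(t)$ stochastically between averages of $M^l(s,t)$ and $M^u(s,t)$ i.i.d.\ copies of $\widetilde{F}_{k,\lambda}(s)$, and then applies Chebyshev's inequality to those averages using the variance bound of Lemma~\ref{lemma:sigma.ub.1}. Your primary approach instead routes everything through the $H$-functional via the distributional identity before~(\ref{rela1}), sandwiching the random Poisson count between deterministic levels $n_\pm$ and absorbing the radius mismatch with property~(ii); this leans on Proposition~\ref{prop:H.conv.in.prob} and the continuity estimates of Proposition~2, and avoids re-doing any block argument. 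What it buys is conceptual economy: once almost-sure convergence of $\widetilde{H}$ is in hand, the Poissonized statement follows by a standard transfer. Your parenthetical alternative is in fact the shortest of all three proofs and is a streamlined version of the paper's own argument: since Lemma~\ref{lemma:sigma.ub.1} already gives $\vari{\widetilde{F}_{k,\lambda}(t)} = O(t^{-d}) \to 0$, a single application of Chebyshev together with $\overline{F}_{k,\lambda}(t) \to a_{k,\lambda}$ suffices, and the i.i.d.\ block decomposition in the paper's proof is not actually needed at this stage.
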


\begin{proof}
The proof relies on lower and upper bounds, which asymptotically
coincide.

For both bounds, we will use the fact that
\[
\vari{\widetilde{F}_{k, \lambda}(s)} \leq \frac{1}{\lambda s^d}
%\label{varibound}
\]
because of Lemma~\ref{lemma:sigma.ub.1}, and hence
\begin{equation}
\pr{\left|\frac{1}{M} \sum_{i = 1}^{M} \widetilde{N}_i(s) -
\expect{\widetilde{F}_{k, \lambda}(s)}\right| \geq \eps} \leq
\frac{1}{M \lambda s^d \eps^2}
\label{chebyshev1}
\end{equation}
for any $M \geq 1$ and $\eps > 0$ by virtue of Chebyshev's inequality.

We first consider the lower bound.

Proposition~\ref{prop:one} implies that there exists an $s = s_\delta$ such that
\[
\overline{F}_{k, \lambda}(s) \geq a_{k, \lambda} - \frac{1}{4} \delta.
\]

There also exists a $t(s, \delta)$ so that
\[
\left(\frac{t}{s}\right)^d \frac{1}{M^l(s, t)} \leq
1 + \frac{1}{4} \delta
\]
for all $t \geq t(s, \delta)$.

Noting that $\overline{F}_{k, \lambda}(s) \leq 1$
and $M^l(s, t) \leq \left(\frac{t}{s}\right)^d$, we then have
\begin{eqnarray*}
\left(\frac{t}{s}\right)^d \frac{1}{M^l(s, t)} (a_{k, \lambda} - \delta) &\leq&
\left(\frac{t}{s}\right)^d \frac{1}{M^l(s, t)}
\left(\overline{F}_{k, \lambda}(s) - \frac{3}{4} \delta\right)  \\
&\leq& 
\left(1 + \frac{1}{4} \delta\right) \left( \overline{F}_{k, \lambda}(s) -
\frac{3}{4} \delta \right) \\
&\leq& 
\overline{F}_{k, \lambda}(s) - \frac{1}{2} \delta \\
&=&
\expect{\widetilde{F}_{k, \lambda}(s)} - \frac{1}{2} \delta
\end{eqnarray*}
for all $t \geq t(s, \delta)$.

Using the stochastic lower bound~(\ref{lb1}), the above inequality
and~(\ref{chebyshev1}), we derive for all $t \geq t(s, \delta)$,
\begin{eqnarray*}
\pr{\widetilde{F}_{k, \lambda}(t) \leq a_{k, \lambda} - \delta}
&\leq&
\pr{\left(\frac{s}{t}\right)^d \sum_{i = 1}^{M_l(s, t)} \widetilde{N}_i(s) \leq
a_{k, \lambda} - \delta} \\
&=&
\pr{\frac{1}{M^l(s, t)} \sum_{i = 1}^{M^l(s, t)} \widetilde{N}_i(s) \leq
\left(\frac{t}{s}\right)^d \frac{1}{M^l(s, t)} (a_{k, \lambda} - \delta)} \\
&\leq&
\pr{\frac{1}{M^l(s, t)} \sum_{i = 1}^{M^l(s, t)} \widetilde{N}_i(s) \leq
\expect{\widetilde{F}_{k, \lambda}(s)} - \frac{1}{2} \delta} \\
&\leq&
\frac{4}{\lambda s^d M^l(s, t) \delta^2}.
\end{eqnarray*}

The latter term tends to~0 as $t$ grows large for any $\delta > 0$.

We now turn to the upper bound.

Proposition~\ref{prop:one} implies that there exists an $s = s_\delta$ such that
\[
\overline{F}_{k, \lambda}(s) \leq a_{k, \lambda} + \frac{1}{4} \delta.
\]

There also exists a $t(s, \delta)$ so that
\[
\left(\frac{t}{s}\right)^d \frac{1}{M^u(s, t)} \geq
1 - \frac{1}{8} \delta
\]
for all $t \geq t(s, \delta)$.

Noting that $\overline{F}_{k, \lambda}(t) \leq 1$ and assuming
$\delta \leq 1$, we then have
\[
\left(\frac{t}{s}\right)^d \frac{1}{M^u(s, t)} (a_{k, \lambda} + \delta) \geq
\left(\frac{t}{s}\right)^d \frac{1}{M^u(s, t)}
(\overline{F}_{k, \lambda}(s) + \frac{3}{4} \delta) \geq
\left(1 - \frac{1}{8} \delta\right)
(\overline{F}_{k, \lambda}(s) + \frac{3}{4} \delta) \geq
\]
\[
\overline{F}_{k, \lambda}(s) + \frac{5}{8} \delta - \frac{3}{32} \delta^2 \geq
\overline{F}_{k, \lambda}(s) + \frac{1}{2} \delta =
\expect{\widetilde{F}_{k, \lambda}(s)} + \frac{1}{2} \delta
\]
for all $t \geq t(s, \delta)$.

Using the stochastic upper bound~(\ref{ub2}), the above inequality
and~(\ref{chebyshev1}), we derive for all $t \geq t(s, \delta)$,
\begin{eqnarray*}
\pr{\widetilde{F}_{k, \lambda}(t) \geq a_{k, \lambda} + \delta}
&\leq&
\pr{\left(\frac{s}{t}\right)^d \sum_{i = 1}^{M_u(s, t)} \widetilde{N}_i(s) \geq
a_{k, \lambda} + \delta} \\
&=&
\pr{\frac{1}{M^u(s, t)} \sum_{i = 1}^{M^u(s, t)} \widetilde{N}_i(s) \geq
\left(\frac{t}{s}\right)^d \frac{1}{M^u(s, t)} (a_{k, \lambda} + \delta)} \\
&\leq&
\pr{\frac{1}{M^u(s, t)} \sum_{i = 1}^{M^u(s, t)} \widetilde{N}_i(s) \geq
\expect{\widetilde{F}_{k, \lambda}(s)} + \frac{1}{2} \delta} \\
&\leq&
\frac{4}{\lambda s^d M^u(s, t) \delta^2}.
\end{eqnarray*}

The latter term tends to~0 as $t$ grows large for any $\delta > 0$.
\end{proof}

\section{Evaluation of $a_{k,\lm}$}

In Propositions~\ref{prop:H.conv.in.prob} and~\ref{prop:lln1} we showed
that the fraction of nodes that can be properly colored converges
in probability to a constant $a_{k,\lm}$ as the size of the area
and the total number of nodes grow large.
It appears difficult to obtain an explicit expression for
$a_{k,\lm}$ in general.
Below we provide the exact value for the one-dimensional case $d = 1$
and present lower and upper bounds for $d > 1$.

\subsection{One-dimensional case (line)}

In case $d = 1$, a maximum proper coloring of points in the
interval $[0, t]$ can be obtained in a greedy manner by sequential
inspection of these points from left to right.
Specifically, a point at position $x \in [0, t]$ is selected
if fewer than $k$ points in $[x - 1, x]$ have been included,
and then assigned any color that is not already used for any points
in $[x - 1, x]$, and skipped otherwise.
If we now interpret the points in $[0, t]$ as arrival times of
customers, then it can be verified that the maximum proper coloring
obtained in the above fashion corresponds exactly to the arrival
times of those customers that are admitted in a so-called Erlang
loss system with service times~$1$ and capacity~$k$,
starting from an empty state at time~$0$.
In particular, the value of $F_{k,\lm}(t)$ equals the number
of admitted customers $A(k, \lm, t)$ in the latter Erlang loss system,
where the arrivals are governed by a Poisson process.
It is well known~\cite{kelly-1979-reversibility} that $A(k, \lm, t) / (\lambda t)$
converges
in probability to $1 - \mbox{Erl}(\lm, k)$
as $t \to \infty$, where
\[
\mbox{Erl}(\lm, k) =
\frac{\frac{\lm^k}{k!}}{\sum_{l = 0}^{k} \frac{\lm^l}{l!}} =
\frac{\pr{\poiss(\lm) = k}}{\pr{\poiss(\lm) \leq k}}
\]
denotes the so-called Erlang loss probability.
Thus
\[
a_{k,\lm} = 1 - \frac{\frac{\lm^k}{k!}}{\sum_{l = 0}^{k} \frac{\lm^l}{l!}} =
\frac{\sum_{m = 0}^{k - 1} \frac{\lm^m}{m!}}{\sum_{l = 0}^{k} \frac{\lm^l}{l!}} \,.
\]

\iffalse

If we assume $k(\rho) = \kappa \rho$, with $\kappa \in (0, \infty)$, then
\[
\lim_{\rho \to \infty} Erl(\rho, k) = \max\{1 - \kappa, 0\}.
\]
This suggests that in a regime $d(n) = f(n)$, $k(n) = \kappa f(n)$,
where $f(n)$ is some slowly increasing function of~$n$, so that the graph
grows dense and the number of colors grows large, we may expect to see
\[
\lim_{n \to \infty} \expect{L(n, d(n), k(n))} = \max\{1 - \kappa, 0\}.
\]

\fi

\subsection{Higher dimensions}

For $d > 1$ it seems difficult to derive an explicit expression for
$a_{k,\lm}$.
In order to establish bounds, let $g_{\max}$ be the maximum density
that any set of points $V \subseteq \real_+^d$ can have such that
no two points are within distance~$1$.
Thus there is then a sphere of unit radius around every point in~$V$
that does not contain any other point in~$V$.
Hence a trivial upper bound is $g_{\max} \leq 1/V_d(1)$, where $V_d(1)$
is the volume of a sphere of unit radius in $d$~dimensions.
Now observe that any two points that are assigned the same color
cannot be within distance~$1$, i.e., the density of the points that
are assigned the same color can be at most $g_{\max}$.
This implies that $a_{k,\lm} \leq \max\{1, k g_{\max} / \lambda\}$.
While admittedly crude, we expect this upper bound to be asymptotically
tight for large values of~$\lm$, in the sense that
$\lambda a_{k,\lm} \to k g_{\max}$ as $\lambda \to \infty$.

For a given~$s$, let $m(s)$ be the minimum number of sets
$B_1, \dots, B_{m(s)}$ needed such that (i) any point in $\real^d$
is within distance~$s$ from one of the points
in $B_1 \cup \dots \cup B_{m(s)}$
and (ii) no two points within each of the sets $B_m$ are within
distance $1 + 2 s$, $m = 1, \dots, m(s)$.
The points in $B_1 \cup \dots \cup B_{m(s)}$ may be collectively
interpreted as `cell anchor points' or `base stations'.
Now suppose we partition the $k$~colors in $m(s)$ disjoint groups
$C_1, \dots, C_{m(s)}$ of size at least $\lfloor k / m(s) \rfloor$,
and then allocate all the colors in the group $C_m$ to all the
points in the set $B_m$, $m = 1, \dots, m(s)$.
In order to color a point, we will assign it one of the colors
allocated to the nearest anchor point.
Since any point in $\real^d$ is within distance~$s$ from one of the
anchor points, the points that need to be supported by a given anchor
point are all within radius~$s$, and hence the total number is bounded
from above by a Poisson random variable with parameter $\lm V_d(s)$.
This implies that $a_{k,\lm} \geq
\expect{\min\{\poiss(\lm V_d(s)), \lfloor k / m(s) \rfloor\}} / \lambda V_d(s)$.
While rather rough, this upper bound demonstrates that
$a_{k,\lm} \to 1$ as $k \to \infty$.
Thus, any target coloring ratio, however close to one, can be asymptotically
achieved with a sufficiently large but constant number of colors.

\section{Conclusion}

We have examined maximum vertex coloring of random geometric graphs,
in an arbitrary but fixed dimension.
This is a problem in discrete stochastic geometry which is neither
scale-invariant nor smooth, and hence the traditional methodological
framework to obtain limit laws cannot be applied.
We have therefore leveraged different concepts based on subadditivity
to establish convergence laws for the maximum number of vertices
that can be colored with a constant number of colors.
For the constants involved in these results, we derived the exact
value in dimension one, and upper and lower bounds in higher dimensions.

The approach that we have developed for this specific non-linear
Euclidean problem, offers great potential to be extended to a broader
class of problems, which will be the subject of further research. 
Some specific questions that we aim to address in future work are:
(1) What can one say when the `coverage area' is not a disk,
but some arbitrarily shaped area?
(2) Can one prove generalizations when the disk radius~$r$ is not
a constant, but a generally distributed random variable?

\bibliography{rggDAMreferences}
\bibliographystyle{plain}
\end{document}